\documentclass[a4paper,12pt]{article}
\usepackage{lineno}
\usepackage{hyperref}
\usepackage{mathrsfs}
\usepackage{amssymb}
\usepackage{amsmath}
\usepackage{amsfonts}
\usepackage{amsthm}
\usepackage{amscd}
\usepackage{enumerate}
\usepackage[all]{xy}

\theoremstyle{definition}
\newtheorem{defn}{Definition}[section]
\newtheorem{lemma}[defn]{Lemma}
\newtheorem{theorem}[defn]{Theorem}

\newtheorem{corollary}[defn]{Corollary}
\newtheorem{eg}[defn]{Example}
\newtheorem{remark}[defn]{Remark}

\numberwithin{equation}{section}

\setlength{\topmargin}{0in}
\setlength{\oddsidemargin}{0cm}
\setlength{\textheight}{22cm}
 \setlength{\textwidth}{17cm}
\linespread{1.5}

\bibliographystyle{plain}

\begin{document}

\title{Computations of $K_2$ for certain $\mathbb{Z}/p^s\mathbb{Z}$-algebras and the extension of Oliver's logarithm}    

\author{
\textbf{Yakun Zhang}\\
\small \emph{School of Mathematics, Nanjing Audit University,}\\
\small \emph{Nanjing 211815, China}\\
\small \emph{E-mail: zhangyakun@nau.edu.cn}\\
}
\date{}
\maketitle


\noindent \textbf{Abstract.} This paper describes the $K$-theory structure for three algebra classes. For cyclic $p$-group rings and truncated polynomial rings over $\mathbb{Z}/p^s\mathbb{Z}$, we determine reduced $K_2$-structures via a common algebraic framework. For abelian $p$-group rings over $\widehat{\mathbb{Z}}_p$, we extend the isomorphism between reduced continuous $K_2$ and the first cyclic homology group to all finite abelian $p$-groups. A constructive proof using a generalized Artin-Hasse map yields an explicit splitting. This isomorphism is realized by extending Oliver's $p$-adic logarithm. We also characterize the map from reduced continuous $K_2$ to reduced linearized $K_2$, clarifying the links between $K_2$, cyclic homology, and  K\"{a}hler differentials.

\noindent \textbf{2020 Mathematics Subject Classification:} 19C20, 16S34, 19D55, 13N05.

\noindent \textbf{Keywords:} $K$-theory, $\mathbb{Z}/p^s\mathbb{Z}$-algebras, group rings,  Oliver's logarithm, cyclic homology.

\section{Introduction}

Let $p$ be a fixed prime and $C_{n}$ the cyclic group of order $n$. For any augmented $k$-algebra $R$ with ideal $I$, we define the reduced functor $\widetilde{F}(R) = \ker(F(R) \to F(k))$.

While the global structure of $K_2(\mathbb{Z}[G])$ has been explored in various contexts, including our recent work \cite{Yakun2024note}, precisely determining the orders of these groups remains a formidable challenge. A standard strategy to address this involves establishing sharp lower bound estimates, which are inherently contingent upon the structural analysis of $K_2$ for various quotients of the group ring $\mathbb{Z}[G]$. In particular, the study of $K_2(\mathbb{Z}[G]/|G|\Gamma)$, where $\Gamma$ denotes the maximal $\mathbb{Z}$-order of $\mathbb{Q}[G]$, was initiated in \cite{chenHong2014} to provide such bounds. However, that investigation stopped short of yielding concrete, non-trivial computational results for fundamental cases like elementary abelian $p$-groups or cyclic $p$-groups.

Motivated by these considerations, our previous work \cite{smallK2} provided explicit calculations and examples for $\mathbb{Z}[G]/|G|\Gamma$ and $\mathbb{Z}[G]/p^s\mathbb{Z}[G]$ involving groups of small orders. It is important to note that the structural analysis of $\mathbb{Z}[G]/|G|\Gamma$ often reduces to the study of its $p$-primary components, which are intrinsically linked to group rings with finite coefficients $\mathbb{Z}/p^s\mathbb{Z}$. Building upon these foundations, we have initiated a systematic program to determine $K_2(\mathbb{Z}[G]/p^s\mathbb{Z}[G])$ explicitly. Following the explicit determination of $K_2$ for elementary abelian $p$-groups in \cite{zhang2024explicit}, the present paper focuses on the more intricate case where $G$ is a cyclic $p$-group $C_{p^n}$, as detailed in Section 2.

 We investigate the structure of $\widetilde{K}_2(k[G])$ for $k = \mathbb{Z}/p^s\mathbb{Z}$. When $s=1$, the result for $G = C_{p^n}$ is straightforward; as shown in \cite[Corollary 4.4]{dennis1975k}, we have
$$\widetilde{K}_2(\mathbb{F}_{p}[C_{p^n}]) = \widetilde{K}_2(\mathbb{F}_{p}[x]/(x^{p^n})) = 0.$$
In this case, explicit order formulas for such $K$-groups are available in the general setting \cite{chen2012explicit, zhang2019some}. 

However, the complexity increases significantly when $s \geq 2$, as $k[C_{p^n}]$ is no longer isomorphic to the truncated polynomial ring $k[x]/(x^{p^n})$. By analyzing the concrete examples presented in \cite{smallK2}, we recognize the necessity of investigating the exponent and the inverse limit of $\widetilde{K}_2(k[C_{p^n}])$. Moreover, while the case $s=1$ admits definitive order formulas, no such general results currently exist for $s \geq 2$. To bridge this gap, we exploit the augmentation ideal $I$ (see Section 2), which allows us to identify $\widetilde{K}_2(k[C_{p^n}])$ with its polynomial counterpart via the following isomorphisms:
\begin{equation*} 
\widetilde{K}_2(k[C_{p^n}]) \xrightarrow{\cong} \widetilde{K}_2(k[C_{p^n}]/I^{p^n}) \xleftarrow{\cong} \widetilde{K}_2(k[x]/(x^{p^n})). 
\end{equation*}

To compute $\widetilde{K}_2(k[x]/(x^{p^n}))$, we draw on the work of Roberts and Geller \cite[Section 7]{roberts2006k2}, who established results for the $\widetilde{K}_2$ of certain truncated polynomial rings. Specifically, they proved that
$$\widetilde{K}_2(\mathbb{Z}[x]/(x^n)) \cong \bigoplus_{i=2}^{n}\mathbb{Z}/i\mathbb{Z}, \quad \widetilde{K}_2(\mathbb{Z}_{S}[x]/(x^n)) \cong \mathbb{Z}_{S} \otimes_{\mathbb{Z}} \left(\bigoplus_{i=2}^{n}\mathbb{Z}/i\mathbb{Z}\right),$$
where $S$ is a multiplicative set in $\mathbb{Z}$. By adapting their approach to $k$-coefficient rings (with key differences detailed in Section 2), we first derive the analogous result for $\widetilde{K}_2(k[x]/(x^n))$:

\vspace{0.5em}
\noindent \textbf{Theorem A (Theorem \ref{thm1}).} 
For $s \geq 2$ and $n \geq 2$, there is a group isomorphism:
$$ \widetilde{K}_2\left( (\mathbb{Z}/p^s\mathbb{Z})[x]/(x^n) \right) \cong \mathbb{Z}/p^{s-1}\mathbb{Z} \otimes_{\mathbb{Z}} \left( \bigoplus_{i=2}^n \mathbb{Z}/i\mathbb{Z} \right). $$

Using Theorem A and the structural link between the group ring and the truncated polynomial ring established via the nilpotency of $I$, we derive a related result for $\widetilde{K}_2(k[C_{p^n}]/I^{p^n})$ (see Lemma \ref{keylemma}), and subsequently establish our main result:

\vspace{0.5em}
\noindent \textbf{Theorem B (Theorem \ref{thm2}).} For $s \geq 2$, let $G$ be a cyclic $p$-group of order $p^n$. There is a group isomorphism:
$$ \widetilde{K}_2\left( (\mathbb{Z}/p^s\mathbb{Z})[G] \right) \cong \mathbb{Z}/p^{s-1}\mathbb{Z} \otimes_{\mathbb{Z}} \left( \bigoplus_{i=2}^{p^n} \mathbb{Z}/i\mathbb{Z} \right). $$

For general finite abelian groups $G$, determining the explicit structure of $\widetilde{K}_2(k[G])$ remains a computationally intensive endeavor; some specific results are detailed in \cite{gao2015explicit, zhang2019some}. Consequently, we focus on the inverse limit of $\widetilde{K}_2(k[G])$, i.e., the continuous $K_2$-group $\widetilde{K}_2^{c}(\widehat{\mathbb{Z}}_p[G])$. Oliver \cite[p.~540]{oliver1987k} proved that this group has the same order as the first cyclic homology group $HC_1(\widehat{\mathbb{Z}}_p[G])$. Following the characterization in \cite[p.~532]{oliver1987k}, this homology group is identified as:
$$HC_1(\widehat{\mathbb{Z}}_p[G]) = (G\otimes_{\widehat{\mathbb{Z}}_p} \widehat{\mathbb{Z}}_p[G])/\langle g\otimes \lambda g \mid g\in G, \lambda \in \widehat{\mathbb{Z}}_p\rangle,$$
which admits the following structural decomposition \cite[p.~554]{oliver1987k}:
\begin{equation}\label{HC1}
	HC_1(\widehat{\mathbb{Z}}_p[G]) \cong HC_1(\mathbb{Z}[G]) \cong \bigoplus_{g\in G} G/\langle g\rangle.
\end{equation}

Although Oliver established a group homomorphism from $\widetilde{K}_2^{c}(\widehat{\mathbb{Z}}_p[G])$ to $HC_1(\widehat{\mathbb{Z}}_p[G])$ using $p$-adic logarithmic techniques (see \eqref{Gamma_2}), an isomorphism was previously confirmed only for products of cyclic $p$-groups of \textit{equal order} \cite[Lemma 3.8]{oliver1987k}. The general case for arbitrary finite abelian $p$-groups remained open. In Section 3, we resolve this by establishing the following definitive structure theorem:

\vspace{0.5em}
\noindent \textbf{Theorem C (Theorem \ref{thm3}).} Let $G$ be an arbitrary finite abelian $p$-group. There exists a sequence of isomorphisms:
$$\widetilde{K}_2^{c}(\widehat{\mathbb{Z}}_p[G]) \cong HC_1(\widehat{\mathbb{Z}}_p[G]) \cong \bigoplus_{g \in G} G/\langle g\rangle.$$

In Remark \ref{K2L}, this first isomorphism is explicitly described via the extended Oliver's logarithmic map given in \eqref{Gamma_2_ext}.

We conclude by situating our results within the context of related algebraic invariants (retaining the initial definitions of $k, R, I$ and including the case $k=\widehat{\mathbb{Z}}_p$). Clauwens \cite{clauwens1987k, clauwens1994k} introduced $K_{2,L}(R)$, the linearized $K_2$ of a commutative ring $R$, which is intrinsically linked to cyclic homology (see also \cite{cortinas2006obstruction, geller1994hodge, weibel1987nil}). Specifically, there is an isomorphism
$\widetilde{K}_{2,L}(R) \cong \Omega_{R/k}/\mathrm{d}I,$
where $\Omega_{R/k}$ denotes the relative differential module of $R$ over
$k$. According to Proposition 2.1.14 in \cite{loday1997cyclic}, we have $\Omega_{R/k}/\mathrm{d}R \cong HC_1(R)$. Under the condition $\Omega_k = 0 = HC_1(k)$, this leads to the following sequence of isomorphisms:,
\begin{equation}\label{isos1}
\widetilde{K}_{2,L}(R) \cong \Omega_{R/k}/\mathrm{d}I \cong \Omega_{R/k}/\mathrm{d}R \cong \widetilde{HC}_1(R) \cong HC_1(R).	
\end{equation}
Additionally, Bloch's Theorem \cite[Section 2]{roberts2006k2} asserts that $\widetilde{K}_{2}(R) \cong \Omega_{R/k}/\mathrm{d}I$ in specific settings.
For the case $R=\widehat{\mathbb{Z}}_p[G]$, where $G$ is a finite abelian $p$-group, we establish that these invariants coincide:
\begin{equation}\label{isos2}
\widetilde{K}_2^{c}(\widehat{\mathbb{Z}}_p[G]) \cong HC_1(\widehat{\mathbb{Z}}_p[G]) \cong \Omega_{\widehat{\mathbb{Z}}_p[G]/\widehat{\mathbb{Z}}_p}/\mathrm{d}I \cong \widetilde{K}_{2,L}(\widehat{\mathbb{Z}}_p[G]).
\end{equation}
The first isomorphism is realized by the explicit map provided in Remark 3.2, while the subsequent isomorphisms follow from \eqref{isos1}. Furthermore, the explicit isomorphism between the first and last terms, $\widetilde{K}_2^{c}(\widehat{\mathbb{Z}}_p[G]) \cong \widetilde{K}_{2,L}(\widehat{\mathbb{Z}}_p[G])$, is also detailed in Remark 3.2 as a consequence of this chain.
These identifications consolidate the $K$-theoretic, homological, and differential descriptions of $R$ into a single consistent framework. This equivalence allows the continuous $K_2$-group to be characterized and computed via the more accessible terms of cyclic homology and Kähler differentials.

\section{Computations of  $\widetilde{K}_2((\mathbb{Z}/p^s\mathbb{Z})[x]/(x^{n}))$ and $\widetilde{K}_2((\mathbb{Z}/p^s\mathbb{Z})[C_{p^n}])$}

Throughout this section, for the sake of brevity, let $k = \mathbb{Z}/p^s\mathbb{Z}$ and $G = C_{p^n}$ unless otherwise specified. We focus on the case $s \geq 2$, as the corresponding results for $s=1$ are relatively trivial, as noted in the introduction. Let $I$ denote the augmentation ideal of the group ring $k[G]$. Since $I$ is nilpotent, the group $\widetilde{K}_2(k[G])$ can be realized as the inverse limit of $\widetilde{K}_2(k[G]/I^m)$. This structural property allows us to establish a lower bound for $\widetilde{K}_2(k[G])$ by computing the truncated stages $\widetilde{K}_2(k[G]/I^m)$ for certain $m$. Furthermore, observing that $\widetilde{K}_2(k[G]/I^m)$ is a homomorphic image of $\widetilde{K}_2(k[x]/(x^m))$, our strategy begins with determining the structure of the latter.

\begin{theorem} \label{thm1}
For $s \geq 2$ and $n \geq 2$, there is a group isomorphism:
$$ \widetilde{K}_2\left( (\mathbb{Z}/p^s\mathbb{Z})[x]/(x^n) \right) \cong \mathbb{Z}/p^{s-1}\mathbb{Z} \otimes_{\mathbb{Z}} \left( \bigoplus_{i=2}^n \mathbb{Z}/i\mathbb{Z} \right). $$
\end{theorem}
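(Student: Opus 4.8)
The plan is to follow the strategy of Roberts--Geller \cite{roberts2006k2} for $\widetilde{K}_2(\mathbb{Z}[x]/(x^n))$ and adapt it to the coefficient ring $k=\mathbb{Z}/p^s\mathbb{Z}$. Write $A_n = k[x]/(x^n)$ and let $\mathfrak{m}=(x)$ be the nilpotent ideal, so $\widetilde{K}_2(A_n)=\ker(K_2(A_n)\to K_2(k))$. First I would set up the truncation/transfer machinery: the surjections $A_n \twoheadrightarrow A_{n-1}$ induce an inverse system, and the key is to control the relative groups $\widetilde{K}_2(A_n, A_{n-1}) = \ker(\widetilde{K}_2(A_n)\to \widetilde{K}_2(A_{n-1}))$. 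By the van der Kallen / Bloch-type results on relative $K_2$ of truncated polynomial rings (used in exactly this form in \cite[Section 7]{roberts2006k2}), each such relative term is generated by Dennis--Stein symbols $\langle ax^{n-1}, x\rangle$ or $\langle x^{i}, ax^{j}\rangle$ with $i+j=n$, and the structure of $\Omega_{A_n/k}/\mathrm{d}A_n \cong HC_1(A_n)$ governs it via Bloch's theorem. Over $\mathbb{Z}$ one gets the cyclic summand $\mathbb{Z}/n\mathbb{Z}$ at stage $n$; over $k=\mathbb{Z}/p^s\mathbb{Z}$ the same computation of $\Omega_{A_n/k}/\mathrm{d}A_n$ yields $(\mathbb{Z}/p^s\mathbb{Z})/\langle n\rangle$-type contributions, i.e. the tensor product $\mathbb{Z}/p^{s-1}\mathbb{Z}\otimes_{\mathbb{Z}}\mathbb{Z}/n\mathbb{Z}$ appears instead — the extra drop from $p^s$ to $p^{s-1}$ being the one genuinely new phenomenon, arising because $\mathrm{d}(x^{p^s})$-type relations and the relation $p^s=0$ interact (concretely, $\mathrm{d}x$ itself becomes a class of order dividing $p^{s-1}$ rather than infinite order after the identifications, reflecting $p\,\mathrm{d}x = \mathrm{d}(px)$ being killed differently than over $\mathbb{Z}$).

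The main steps, in order, would be: (1) compute $\Omega_{A_n/k}$ explicitly as the $A_n$-module on $\mathrm{d}x$ modulo $\mathrm{d}(x^n)=n x^{n-1}\mathrm{d}x$, then compute $\Omega_{A_n/k}/\mathrm{d}A_n$ by quotienting out $\mathrm{d}(x^i)=i x^{i-1}\mathrm{d}x$ for all $i$; (2) identify this cokernel with $\bigoplus_{i=2}^n (\mathbb{Z}/p^s\mathbb{Z})/(i)$ suitably interpreted, and check that the $p$-part bookkeeping collapses to $\mathbb{Z}/p^{s-1}\mathbb{Z}\otimes_{\mathbb{Z}}\left(\bigoplus_{i=2}^n\mathbb{Z}/i\mathbb{Z}\right)$; (3) invoke Bloch's theorem (as cited, $\widetilde{K}_2(R)\cong \Omega_{R/k}/\mathrm{d}I$ in the relevant setting — here with $R=A_n$, $I=\mathfrak{m}$, and one must verify $A_n$ satisfies the hypotheses, e.g. that it is a $k$-algebra with $k$ in the right class and the relevant $HC$ and $\Omega$ vanishing conditions hold for $k=\mathbb{Z}/p^s\mathbb{Z}$) to pass from the differential computation to $\widetilde{K}_2$; alternatively (4) do it inductively via the relative sequences $0\to \widetilde{K}_2(A_n,A_{n-1})\to \widetilde{K}_2(A_n)\to \widetilde{K}_2(A_{n-1})\to 0$, showing each is split by a Dennis--Stein symbol splitting, and that the sequence of orders multiplies correctly.

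The hard part will be step (3)/(4): ensuring that the Roberts--Geller arguments, which rest on properties of $\mathbb{Z}$ (torsion-freeness, the behavior of the integral cyclic homology, and the splitting of the relative sequences), survive the passage to $k=\mathbb{Z}/p^s\mathbb{Z}$, where $k$ itself has torsion and $\Omega_{k/\mathbb{Z}}\ne 0$ in general — the introduction flags "key differences detailed in Section 2," so I expect the proof must pinpoint exactly where the $p^s\to p^{s-1}$ shift enters and verify that the relative $K_2$ groups are still generated by the expected Dennis--Stein symbols with no unexpected extra relations or extension problems. Concretely, the obstacle is controlling the extension in $0\to \widetilde{K}_2(A_n,A_{n-1})\to \widetilde{K}_2(A_n)\to \widetilde{K}_2(A_{n-1})\to 0$: one needs each splitting to be compatible so that the final answer is a direct sum and not merely an iterated extension with the right order. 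I would handle this by exhibiting explicit ring retractions or by using the $\lambda$-operations / Frobenius-type endomorphisms $x\mapsto x^m$ on $A_{mn}$ to produce idempotent-like decompositions, following the template of \cite[Section 7]{roberts2006k2} but tracking the mod-$p^s$ coefficients throughout.
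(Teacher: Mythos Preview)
Your inductive framework (4) matches the paper's Step~1: there is an exact sequence
\[
k/nk \xrightarrow{\ \rho\ } \widetilde{K}_2(A_n) \longrightarrow \widetilde{K}_2(A_{n-1}) \longrightarrow 0, \qquad \rho(a)=\langle ax^{n-1},x\rangle,
\]
and the theorem reduces to showing (a) $\mathrm{Im}(\rho)$ is a direct summand, and (b) it is cyclic of order exactly $p^{\min(s-1,\,v_p(n))}$. The genuine gap is in how you propose to get (b).

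Your route (3) via Bloch's theorem does not work, and the differential computation you sketch gives the \emph{wrong} group. With $A_n=k[x]/(x^n)$ one has $\Omega_{A_n/k}=A_n\,\mathrm{d}x/(nx^{n-1}\mathrm{d}x)$ and $\mathrm{d}I=\langle ix^{i-1}\mathrm{d}x: 1\le i\le n-1\rangle$, hence
\[
\Omega_{A_n/k}/\mathrm{d}I \;\cong\; \bigoplus_{i=2}^{n} k/ik \;=\; \bigoplus_{i=2}^{n}\mathbb{Z}/p^{\min(s,\,v_p(i))}\mathbb{Z},
\]
whereas the theorem asserts exponents $\min(s-1,\,v_p(i))$. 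For $n\ge p^s$ these disagree (e.g.\ $p=2$, $s=2$, $n=4$: the differential model has a $\mathbb{Z}/4$ summand at $i=4$, but $\widetilde{K}_2$ has only $\mathbb{Z}/2$). So the $p^s\to p^{s-1}$ drop is \emph{not} visible in $\Omega/\mathrm{d}I$ or $HC_1$; Bloch's isomorphism genuinely fails over the non-regular base $k=\mathbb{Z}/p^s\mathbb{Z}$, and your heuristic about $\mathrm{d}x$ acquiring order $p^{s-1}$ does not account for it.

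The paper's mechanism for both (a) and (b) is a comparison you do not mention. For $n=p^m$, set $u=\zeta_{p^m}-1$; since $p\in u^{\varphi(p^m)}\mathbb{Z}[u]$ there is a ring map $k[x]/(x^{p^m})\to \mathbb{Z}[u]/(u^{p^m})$, $x\mapsto u$. By Dennis--Keating, $K_2(\mathbb{Z}[u]/(u^{p^m}))\cong\mathbb{Z}/p$ is generated by $\langle u,u^{p^m-1}\rangle$, so $\langle x,x^{p^m-1}\rangle$ is not divisible by $p$ in $\widetilde{K}_2(A_{p^m})$ and hence spans a direct summand --- this is (a). For (b), one compares $A_l$ (for $l$ large) with $\mathbb{Z}[u]/(u^{s\varphi(p^m)})$, whose $K_2$ has order $p^{\min(s-1,m)}$ again by Dennis--Keating; a commutative square of these cyclotomic maps forces the order of $\langle x,x^{p^m-1}\rangle$ to be exactly $p^{\min(s-1,m)}$. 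The composite $n=p^m a$ case is then handled by a transfer along $x\mapsto x^a$. Your $x\mapsto x^m$ operations are adjacent to this last transfer step, but without the cyclotomic target you have no device that produces the exponent $s-1$ rather than $s$, and that is precisely the new input over $\mathbb{Z}/p^s\mathbb{Z}$.
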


\begin{proof}
\textit{Step 1.} Since $\Omega_{k}$ is trivial, according to \cite[Theorem 1(a)]{roberts2006k2}, there exists an exact sequence:
\begin{equation*}\label{key_seq}
    k/nk \xrightarrow{\rho} \widetilde{K}_2(k[x]/(x^n)) \rightarrow \widetilde{K}_2(k[x]/(x^{n-1})) \rightarrow 0,
\end{equation*}
where the map $\rho$ is defined by $\rho(a) = \langle ax^{n-1}, x \rangle = -a\langle x, x^{n-1} \rangle$.

\textit{Step 2.} For the case $n=p^m$, let $\zeta_{p^m}$ be a primitive $p^m$-th root of unity and set $u = \zeta_{p^m}-1$. From algebraic number theory, we have $p\mathbb{Z}[u] = u^{\varphi(p^m)}\mathbb{Z}[u]$. This induces a canonical ring homomorphism:
$$ g_1: k[x]/(x^{p^m}) = \mathbb{Z}[x]/(p^s, x^{p^m}) \rightarrow \mathbb{Z}[u]/(u^{s\varphi(p^m)}, u^{p^m}) = \mathbb{Z}[u]/(u^{p^m}), $$
where $g_1(x) = u$. As shown in \cite[Section 7]{roberts2006k2}, $K_2(\mathbb{Z}[u]/(u^{p^m}))$ is a cyclic group of order $p$, generated by $\langle u, u^{p^m-1} \rangle$. Since $\langle x, x^{p^m-1} \rangle$ maps onto this generator of $\mathbb{Z}/p\mathbb{Z}$, it follows that the symbol $\langle x, x^{p^m-1} \rangle$ is not divisible by $p$ in the group $\widetilde{K}_2(k[x]/(x^{p^m}))$. According to the fundamental theorem of finitely generated abelian groups, an element of $p$-power order that is not divisible by $p$ must generate a cyclic direct summand. Thus, the image of $\rho$, denoted by $\text{Im}(\rho) \cong \mathbb{Z}/p^d\mathbb{Z}$, is a direct summand of $\widetilde{K}_2(k[x]/(x^{p^m}))$, and the exact sequence splits through this inclusion. For the general case $n=p^m a$ with $(a, p)=1$, a similar argument using the transfer map confirms that $\text{Im}(\rho)$ remains a split direct summand of $\widetilde{K}_2(k[x]/(x^n))$.

\textit{Step 3.} For $n = p^m$, suppose $l \geq \max\{s\varphi(p^m), p^m\}$. Consider the following commutative diagram of $K_2$-groups induced by the canonical ring maps:
\begin{equation*}
\begin{CD}
 \widetilde{K}_2(k[x]/(x^l)) @>{g_2}>> K_2(\mathbb{Z}[u]/(u^{s\varphi(p^m)})) = \mathbb{Z}/p^r\mathbb{Z} \\
 @VV{g_3}V @VV{g_4}V \\
 \widetilde{K}_2(k[x]/(x^{p^m})) @>>> K_2(\mathbb{Z}[u]/(u^{p^m})) = \mathbb{Z}/p\mathbb{Z}
\end{CD}
\end{equation*}
where $g_3$ is a split surjection by Step 2, and $g_4$ is surjective according to \cite[Proposition 1.1]{alperin1985sk}. By \cite[Theorem 5.1]{dennis1975k}, we have $r = \min\{s-1, m\}$. It follows that the order of the element $\langle x, x^{p^m-1} \rangle$ is exactly $p^r$. More generally, for $n = p_1^{m_1} \cdots p_t^{m_t}$, the transfer map argument demonstrates that the order of $\langle x, x^{n-1} \rangle$ is $p_1^{r_1} \cdots p_t^{r_t}$, where $r_i = \min\{s-1, m_i\}$.

\textit{Step 4.}  Since $\text{Im}(\rho)$ is a split direct summand of $\widetilde{K}_2(k[x]/(x^n))$, and $\text{Im}(\rho)$ is the subgroup generated by $\langle x^{n-1}, x \rangle$, which is isomorphic to $\mathbb{Z}/p^{s-1}\mathbb{Z} \otimes_{\mathbb{Z}} \mathbb{Z}/n\mathbb{Z}$, we obtain the following decomposition:
$$ \widetilde{K}_2(k[x]/(x^n)) \cong \widetilde{K}_2(k[x]/(x^{n-1})) \oplus (\mathbb{Z}/p^{s-1}\mathbb{Z} \otimes_{\mathbb{Z}} \mathbb{Z}/n\mathbb{Z}). $$
The theorem then follows by induction on $n$.
\end{proof}

Building upon the structure of truncated polynomial rings, we now extend these results to the cyclic group ring by considering its truncated quotients with respect to the augmentation ideal:

\begin{lemma} \label{keylemma}
For $s \geq 2$, let $G$ be a cyclic $p$-group of order $p^n$ and let $I$ be the augmentation ideal of the group ring $(\mathbb{Z}/p^s\mathbb{Z})[G]$. There is a group isomorphism:
$$ \widetilde{K}_2\left( (\mathbb{Z}/p^s\mathbb{Z})[G]/I^{p^n} \right) \cong \mathbb{Z}/p^{s-1}\mathbb{Z} \otimes_{\mathbb{Z}} \left( \bigoplus_{i=2}^{p^n} \mathbb{Z}/i\mathbb{Z} \right). $$
\end{lemma}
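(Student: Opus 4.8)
The plan is to compare the group ring quotient $k[G]/I^{p^n}$ with the truncated polynomial ring $k[x]/(x^{p^n})$ via the augmentation ideal, exploiting that $I$ is generated by $\sigma - 1$ where $\sigma$ generates $G = C_{p^n}$. There is a natural surjective $k$-algebra homomorphism
$$ k[x]/(x^{p^n}) \longrightarrow k[G]/I^{p^n}, \qquad x \mapsto \sigma - 1, $$
whose kernel is generated by the image of $(1+x)^{p^n} - 1$. Since $k[G]/I^{p^n}$ is therefore a quotient of $k[x]/(x^{p^n})$, the induced map on $\widetilde{K}_2$ is surjective (relative $K_2$ of nilpotent extensions is right exact in the relevant sense, or one invokes the same Roberts--Geller machinery), so by Theorem~\ref{thm1} we already have that $\widetilde{K}_2(k[G]/I^{p^n})$ is a quotient of $\mathbb{Z}/p^{s-1}\mathbb{Z} \otimes_{\mathbb{Z}} (\bigoplus_{i=2}^{p^n} \mathbb{Z}/i\mathbb{Z})$. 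The substance of the argument is the reverse inequality: showing that nothing collapses.

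First I would analyze the kernel: in $k[x]/(x^{p^n})$ the element $(1+x)^{p^n}-1 = \sum_{j=1}^{p^n}\binom{p^n}{j}x^j$ has lowest-degree term $\binom{p^n}{1}x = p^n x$, and since $s \ge 2$ and... actually $p^n x$ need not vanish, so the kernel $J$ is the ideal generated by $p^n x + \binom{p^n}{2}x^2 + \cdots$. The key point is that all the binomial coefficients $\binom{p^n}{j}$ for $1 \le j \le p^n - 1$ are divisible by $p$, so $J \subseteq (px, x^{p^n}) $ combined with higher terms; more precisely $J$ is generated by an element lying in degree $\ge 1$ with leading coefficient divisible by $p^n$. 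I would then trace the generators $\langle x^{i-1}, x\rangle$ through the quotient map. By Step 4 of the proof of Theorem~\ref{thm1}, these symbols generate the direct summands, and the order of $\langle x^{i-1}, x \rangle$ in $\widetilde{K}_2(k[x]/(x^i))$ is $p^{\min\{s-1, v_p(i)\}} \cdot (\text{prime-to-}p \text{ part of } i)$ in the sense captured by $\mathbb{Z}/p^{s-1}\mathbb{Z}\otimes \mathbb{Z}/i\mathbb{Z}$. The task is to verify that the images of these symbols in $\widetilde{K}_2(k[G]/I^{p^n})$ retain the same orders and remain independent, i.e. still span a direct sum of the same size.

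The cleanest route to the reverse inequality is a counting/transfer argument rather than a direct symbol manipulation: I would produce a ring homomorphism (or family of them) \emph{out of} $k[G]/I^{p^n}$ detecting each summand. Following the template of Steps 2--3 in Theorem~\ref{thm1}, map $k[G] = \mathbb{Z}[x]/(p^s, x^{p^n}-1)$ — wait, more usefully one uses the character-type maps $\sigma \mapsto \zeta_{p^m}$ for each $m \le n$, landing in $\mathbb{Z}/p^s\mathbb{Z}[\zeta_{p^m}]$ and its truncations; since $\zeta_{p^m} - 1$ is a uniformizer with $(\zeta_{p^m}-1)^{\varphi(p^m)} \sim p$, the ideal $I^{p^n}$ maps into a high power of the maximal ideal, and one gets comparison maps to the rings $\mathbb{Z}[u]/(u^N)$ whose $K_2$ was computed by Dennis--Stein and used in Theorem~\ref{thm1}. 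Assembling these (one for each cyclic quotient, plus the prime-to-$p$ transfer maps as in Steps 2--3) shows each cyclic summand $\mathbb{Z}/p^{s-1}\mathbb{Z}\otimes\mathbb{Z}/i\mathbb{Z}$ survives, forcing $|\widetilde{K}_2(k[G]/I^{p^n})| \ge |\mathbb{Z}/p^{s-1}\mathbb{Z} \otimes_{\mathbb{Z}} (\bigoplus_{i=2}^{p^n}\mathbb{Z}/i\mathbb{Z})|$. Combined with the surjection from Theorem~\ref{thm1}, this forces equality and hence the isomorphism.

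The main obstacle I anticipate is Step two of this outline — controlling the kernel $J$ precisely enough to guarantee that the quotient map $\widetilde{K}_2(k[x]/(x^{p^n})) \to \widetilde{K}_2(k[G]/I^{p^n})$ is injective, equivalently that the relative $K_2$ of the extension $k[x]/(x^{p^n}) \to k[G]/I^{p^n}$ vanishes. Since $\binom{p^n}{j}$ has $p$-adic valuation $n - v_p(j)$, the element generating $J$ is not simply $p^n x$ but has a delicate Newton polygon, and a naive bound like "$J$ is trivial mod $p^{s-1}$" is false. The resolution is precisely why the counting argument via explicit detecting homomorphisms is preferable to a direct kernel computation: it sidesteps the need to show $\widetilde{K}_2$ of the relative extension vanishes and instead pins down both sides by order, which is also the strategy already vindicated in the proof of Theorem~\ref{thm1}.
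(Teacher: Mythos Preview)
Your proposal is correct and follows essentially the same route as the paper: establish the surjection $\widetilde{K}_2(k[x]/(x^{p^n})) \twoheadrightarrow \widetilde{K}_2(k[G]/I^{p^n})$ via $x \mapsto \sigma-1$, then show it is an isomorphism by re-running the detecting argument of Theorem~\ref{thm1} on the $t$-side, using the character maps $\sigma \mapsto \zeta_{p^m}$ (which factor through $k[G]/I^{p^m}$) together with the prime-to-$p$ transfer maps to pin down the order of each $\langle t, t^{i-1}\rangle$. The paper organizes this as an explicit commutative square of transfer maps between the $x$-tower and the $t$-tower and invokes \cite[Proposition~1.1]{alperin1985sk} for the surjectivity, but the substance is identical to what you outline; your abandoned kernel-analysis route is indeed the wrong way in, and the order-counting argument you settle on is exactly what the paper does.
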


\begin{proof}
Let $\sigma$ be a generator of $G$, and set $t = t_1 = t_2 = \sigma - 1$. Then the augmentation ideal is given by $I = tk[G]$. Since the canonical map of finite rings $k[x]/(x^{p^n}) \to k[t]/(t^{p^n})$ induces a surjection on $\widetilde{K}_2$ \cite[Proposition 1.1]{alperin1985sk}, Theorem \ref{thm1} implies that $\widetilde{K}_2(k[t]/(t^{p^n}))$ is generated by the symbols $\langle t, t^{i-1} \rangle$ where $2 \leq i \leq p^n$ and $p \mid i$. For each such $i$, we write $i = p^m a$ with $m \geq 1$ and $(a, p) = 1$, and consider the following commutative diagram of rings:
\begin{equation*}
\begin{CD}
k[x]/(x^{p^m}) @>{x \mapsto t_1}>> k[t_1]/(t_1^{p^m}) = A \\
@V{x \mapsto y^a}VV @VV{t_1 \mapsto t_2^a}V  \\
k[y]/(y^{p^m a}) @>{y \mapsto t_2}>> k[t_2]/(t_2^{p^m a}) = B
\end{CD}
\end{equation*}
In this construction, $k[y]/(y^{p^m a})$ is a free $k[x]/(x^{p^m})$-module of rank $a$, which admits a transfer map $F_a$ as defined in \cite[Section 2]{stienstra2006k2}. Via the horizontal isomorphisms, this induces a corresponding transfer map $\tilde{F}_a: \widetilde{K}_2(B) \to \widetilde{K}_2(A)$ for the free extension $B/A$ (with basis $\{1, t_2, \dots, t_2^{a-1}\}$), satisfying:
$$ \tilde{F}_a \left( \langle t_2, t_2^{p^m a - 1} \rangle \right) = a \langle t_1, t_1^{p^m - 1} \rangle. $$

By applying techniques analogous to those used in the proof of Theorem \ref{thm1}, one can show that each map $\widetilde{K}_2(k[t]/(t^i)) \to \widetilde{K}_2(k[t]/(t^{i-1}))$ is a split surjection. Furthermore, the order of the element $\langle t, t^{i-1} \rangle$ (where $i=p^m a$) is precisely $p^r$ with $r = \min\{s-1, m\}$. Consequently, the previously mentioned surjection $\widetilde{K}_2(k[x]/(x^{p^n})) \to \widetilde{K}_2(k[t]/(t^{p^n}))$ must be an isomorphism, which completes the proof.
\end{proof}

\begin{theorem} \label{thm2}
For $s \geq 2$, let $G$ be a cyclic $p$-group of order $p^n$. There is a group isomorphism:
$$ \widetilde{K}_2\left( (\mathbb{Z}/p^s\mathbb{Z})[G] \right) \cong \mathbb{Z}/p^{s-1}\mathbb{Z} \otimes_{\mathbb{Z}} \left( \bigoplus_{i=2}^{p^n} \mathbb{Z}/i\mathbb{Z} \right). $$
\end{theorem}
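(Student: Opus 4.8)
The plan is to derive Theorem~\ref{thm2} from Lemma~\ref{keylemma} by proving that the natural map $\widetilde{K}_2(k[G]) \to \widetilde{K}_2(k[G]/I^{p^n})$ is an isomorphism. Since $I$ is nilpotent, say $I^N = 0$, we have $k[G] = k[G]/I^N$, and the tower of ring surjections $k[G]/I^{m+1} \twoheadrightarrow k[G]/I^m$ induces a tower of surjections on $\widetilde{K}_2$ running from $\widetilde{K}_2(k[G])$ down to $\widetilde{K}_2(k[G]/I^{p^n})$; in particular the displayed map is surjective, so it suffices to prove the reverse inequality $|\widetilde{K}_2(k[G])| \le |\widetilde{K}_2(k[G]/I^{p^n})|$, after which Lemma~\ref{keylemma} supplies the stated structure. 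Equivalently, one must show that the connecting map $\widetilde{K}_2(k[G]/I^{m+1}) \to \widetilde{K}_2(k[G]/I^m)$ is an isomorphism for every $m \ge p^n$. (For $n = 0$ both sides vanish, so we may assume $n \ge 1$, in which case $p^n \ge 2$ and Theorem~\ref{thm1} and Lemma~\ref{keylemma} apply.)

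To obtain the upper bound, write $t = \sigma - 1$ for a generator $\sigma$ of $G$, so that $I = t\,k[G]$ and $k[G] = k[x]/\bigl((1+x)^{p^n}-1\bigr)$. For $m$ at least the nilpotency index of $I$, the element $x^m$ lies in the ideal $\bigl((1+x)^{p^n}-1\bigr)$, so there is a surjective ring homomorphism $k[x]/(x^m) \twoheadrightarrow k[G]$ sending $x$ to $t$. By Theorem~\ref{thm1} the source of the induced surjection $\widetilde{K}_2(k[x]/(x^m)) \twoheadrightarrow \widetilde{K}_2(k[G])$ is generated by the symbols $\langle x, x^{i-1}\rangle$ with $2 \le i \le m$, each of order dividing $|\mathbb{Z}/p^{s-1}\mathbb{Z} \otimes_{\mathbb{Z}} \mathbb{Z}/i\mathbb{Z}|$; hence $\widetilde{K}_2(k[G])$ is generated by the images $\langle t, t^{i-1}\rangle$ subject to the same order bounds. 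The crucial extra ingredient is the group-ring relation $(1+t)^{p^n} = 1$, which gives $t^{p^n} = -p\,g(t)$ for a polynomial $g$ with $g(0) = 0$, whence $t^{j} \in p\,k[G]$ for all $j \ge p^n$. Using this divisibility together with the Dennis--Stein symbol relations, in the same vein as the Roberts--Geller arguments employed for Theorem~\ref{thm1} and Lemma~\ref{keylemma}, one shows that $\langle t, t^{i-1}\rangle = 0$ in $\widetilde{K}_2(k[G])$ whenever $i > p^n$: the factor of $p$ is absorbed and the symbol collapses onto symbols of strictly smaller $t$-degree that are already torsion of the appropriate exponent. Consequently $\widetilde{K}_2(k[G])$ is generated by the finitely many symbols $\langle t, t^{i-1}\rangle$ with $2 \le i \le p^n$, so it is a quotient of $\bigoplus_{i=2}^{p^n}\bigl(\mathbb{Z}/p^{s-1}\mathbb{Z} \otimes_{\mathbb{Z}} \mathbb{Z}/i\mathbb{Z}\bigr)$, which yields $|\widetilde{K}_2(k[G])| \le |\widetilde{K}_2(k[G]/I^{p^n})|$ via Lemma~\ref{keylemma}.

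The main obstacle is precisely the vanishing $\langle t, t^{i-1}\rangle = 0$ for $i > p^n$, equivalently the stabilization of the tower $\{\widetilde{K}_2(k[G]/I^m)\}_m$ beyond filtration degree $p^n$. This is the exact point at which the group ring departs from the truncated polynomial ring $k[x]/(x^{p^n})$: for the latter the analogous symbols are in general nonzero by Theorem~\ref{thm1}, and it is only the relation $t^{p^n} = -p\,g(t)$ --- the divisibility of $t^{p^n}$ by $p$ --- that forces the collapse. Making this rigorous requires Dennis--Stein manipulations in filtration degrees exceeding $p^n$; an equivalent and perhaps cleaner formulation is to show that the relative group $K_2\bigl(k[G]/I^{m+1},\, I^m/I^{m+1}\bigr)$ maps to zero in $\widetilde{K}_2(k[G]/I^{m+1})$ for $m \ge p^n$, exploiting that $I^m/I^{m+1}$ is a square-zero ideal contained in $p\cdot(k[G]/I^{m+1})$. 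Granting this, the surjection $\widetilde{K}_2(k[G]) \twoheadrightarrow \widetilde{K}_2(k[G]/I^{p^n})$ becomes an isomorphism of finite abelian groups, and Lemma~\ref{keylemma} gives $\widetilde{K}_2\bigl((\mathbb{Z}/p^s\mathbb{Z})[G]\bigr) \cong \mathbb{Z}/p^{s-1}\mathbb{Z} \otimes_{\mathbb{Z}} \bigl(\bigoplus_{i=2}^{p^n}\mathbb{Z}/i\mathbb{Z}\bigr)$; in particular the group-ring invariant coincides with the truncated-polynomial one at truncation level $p^n$.
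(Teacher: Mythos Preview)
Your strategy of squeezing $\widetilde{K}_2(k[G])$ between a surjective image and the target of Lemma~\ref{keylemma} is the right overall shape, and it matches the paper in that both arguments use the surjection $\widetilde{K}_2(k[G]) \twoheadrightarrow \widetilde{K}_2(k[G]/I^{p^n})$ for the lower bound. The difference is entirely in how the upper bound is obtained, and on your side that step is a genuine gap rather than a proof.

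You yourself flag the problem: the vanishing $\langle t, t^{i-1}\rangle = 0$ for $i > p^n$, equivalently the stabilization of the tower beyond filtration level $p^n$, is asserted but not established. The heuristic ``$t^{p^n} \in p\,k[G]$, so the factor of $p$ is absorbed'' is not a Dennis--Stein argument; the relation $\langle a,bc\rangle = \langle ab,c\rangle + \langle ca,b\rangle$ does not straightforwardly let you strip off a scalar factor of $p$, and the additive relation $\langle a,b\rangle + \langle a,c\rangle = \langle a,b+c-abc\rangle$ produces correction terms that do not obviously telescope. Nor does the observation that $I^m/I^{m+1}$ lies in $p\cdot(k[G]/I^{m+1})$ immediately kill the relative $K_2$: you would need to control $\Omega$ or $D_1$ of that square-zero extension, and those modules are not zero. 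Writing ``granting this'' is an honest acknowledgement that the argument is incomplete at exactly the point where the group ring differs from the truncated polynomial ring.

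The paper sidesteps this entirely. Instead of proving stabilization of the $I$-adic tower, it bounds $\widetilde{K}_2(k[G])$ from above by an \emph{external} object: the exponent bound $\exp \widetilde{K}_2(k[G]) \mid p^{s-1}$ (from \cite{alperin1987sk}) together with the surjection $\widetilde{K}_2^{c}(\widehat{\mathbb{Z}}_p[G]) \twoheadrightarrow \widetilde{K}_2(k[G])$ forces $\widetilde{K}_2(k[G])$ to be a quotient of $\mathbb{Z}/p^{s-1}\mathbb{Z} \otimes \widetilde{K}_2^{c}(\widehat{\mathbb{Z}}_p[G])$. Oliver's identification $\widetilde{K}_2^{c}(\widehat{\mathbb{Z}}_p[G]) \cong \bigoplus_{g\in G} G/\langle g\rangle$ then reduces the upper bound to a combinatorial check that $\mathbb{Z}/p^{s-1}\mathbb{Z}\otimes\bigl(\bigoplus_{g\in G} G/\langle g\rangle\bigr)$ and $\mathbb{Z}/p^{s-1}\mathbb{Z}\otimes\bigl(\bigoplus_{i=2}^{p^n}\mathbb{Z}/i\mathbb{Z}\bigr)$ have the same order. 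No symbol manipulation in high $I$-adic degree is needed. If you want to salvage your internal approach, you would need an actual proof of the stabilization; otherwise, the paper's route via $\widehat{\mathbb{Z}}_p$ and $HC_1$ is both shorter and complete.
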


\begin{proof}
By \cite[Lemma 4.1(a)]{alperin1987sk}, the exponent of $\widetilde{K}_2(k[G])$ divides $p^{s-1}$. Since the continuous $K$-group $\widetilde{K}_2^{c}(\widehat{\mathbb{Z}}_{p}[G])$ maps onto $\widetilde{K}_2(k[G])$, it follows that $\widetilde{K}_2(k[G])$ is a homomorphic image of $\mathbb{Z}/p^{s-1}\mathbb{Z} \otimes_{\mathbb{Z}} \widetilde{K}_2^{c}(\widehat{\mathbb{Z}}_{p}[G])$. Furthermore, as $\widetilde{K}_2(k[G])$ maps onto $\widetilde{K}_2(k[G]/I^{p^n})$, we can establish the desired isomorphism by showing that:
$$\mathbb{Z}/p^{s-1}\mathbb{Z} \otimes_{\mathbb{Z}} \widetilde{K}_2^{c}(\widehat{\mathbb{Z}}_{p}[G]) \cong \widetilde{K}_2(k[G]/I^{p^n}).$$
According to \cite[Lemma 3.8]{oliver1987k}, there is an isomorphism $$\widetilde{K}_2^{c}(\widehat{\mathbb{Z}}_{p}[G]) \cong HC_1(\widehat{\mathbb{Z}}_{p}[G]) \cong \bigoplus_{g \in G} G/\langle g \rangle.$$ Invoking Lemma \ref{keylemma}, the problem reduces to verifying:
$$\mathbb{Z}/p^{s-1}\mathbb{Z} \otimes_{\mathbb{Z}} \left( \bigoplus_{g \in G} G/\langle g \rangle \right) \cong \mathbb{Z}/p^{s-1}\mathbb{Z} \otimes_{\mathbb{Z}} \left( \bigoplus_{i=2}^{p^n} \mathbb{Z}/i\mathbb{Z} \right),$$
which follows from a direct counting of the cyclic summands of each order $p^j$ on both sides. Thus, $\widetilde{K}_2(k[G])$ is squeezed between two isomorphic finite groups, completing the proof.
\end{proof}

\begin{corollary} 
For $s \geq 2$, let $G$ be a cyclic $p$-group of order $p^n$, and let $\varphi$ denote Euler's totient function. Then:
$$ \widetilde{K}_2(k[G]) \cong \left\{
\begin{array}{ll}
\bigoplus_{i=1}^{n} \left( \mathbb{Z}/p^i\mathbb{Z} \right)^{\varphi(p^{n-i})}, & \text{if } 1 \leq n \leq s-1; \\
\bigoplus_{i=1}^{s-2} \left( \mathbb{Z}/p^i\mathbb{Z} \right)^{\varphi(p^{n-i})} \oplus \left( \mathbb{Z}/p^{s-1}\mathbb{Z} \right)^{p^{n-s+1}}, & \text{if } n \geq s \geq 2.
\end{array} \right. $$
\end{corollary}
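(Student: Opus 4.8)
The plan is to deduce this directly from Theorem~\ref{thm2}, which already identifies
$$\widetilde{K}_2(k[G]) \cong \mathbb{Z}/p^{s-1}\mathbb{Z} \otimes_{\mathbb{Z}} \left( \bigoplus_{i=2}^{p^n} \mathbb{Z}/i\mathbb{Z} \right);$$
everything that remains is an arithmetic unwinding of this tensor product, with no further $K$-theoretic input. First I would apply the elementary identity $\mathbb{Z}/p^{s-1}\mathbb{Z} \otimes_{\mathbb{Z}} \mathbb{Z}/i\mathbb{Z} \cong \mathbb{Z}/\gcd(p^{s-1},i)\mathbb{Z}$ summand by summand. Only the indices $i$ divisible by $p$ survive, and, writing $v_p$ for the $p$-adic valuation, an index with $v_p(i) = j \ge 1$ contributes exactly one copy of $\mathbb{Z}/p^{\min(s-1,\,j)}\mathbb{Z}$ (since $\gcd(p^{s-1},i) = p^{\min(s-1,\,v_p(i))}$), while those with $v_p(i) = 0$ contribute nothing. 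Thus
$$\widetilde{K}_2(k[G]) \cong \bigoplus_{j=1}^{n} \left( \mathbb{Z}/p^{\min(s-1,\,j)}\mathbb{Z} \right)^{c_j}, \qquad c_j = \#\{\, i : 2 \le i \le p^n,\ v_p(i) = j \,\}.$$

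Next I would evaluate the multiplicities $c_j$. For $1 \le j \le n$ the indices $i \le p^n$ with $v_p(i) = j$ are precisely those of the form $i = p^j a$ with $1 \le a \le p^{n-j}$ and $\gcd(a,p) = 1$, so $c_j = \varphi(p^{n-j})$ — the extremal case $j = n$ being covered by the convention $\varphi(1) = 1$. Since $s \ge 2$ forces $\min(s-1,j) \ge 1$ for all $j \ge 1$, no summand degenerates, and we obtain the uniform description
$$\widetilde{K}_2(k[G]) \cong \bigoplus_{j=1}^{n} \left( \mathbb{Z}/p^{\min(s-1,\,j)}\mathbb{Z} \right)^{\varphi(p^{n-j})}.$$

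Finally I would separate the two regimes appearing in the statement. If $1 \le n \le s-1$, then $\min(s-1,j) = j$ throughout the range $1 \le j \le n$, and the first displayed formula follows immediately. If instead $n \ge s$, the exponents saturate at $s-1$: the terms for $j = 1, \dots, s-2$ remain $(\mathbb{Z}/p^j\mathbb{Z})^{\varphi(p^{n-j})}$, whereas every $j$ with $s-1 \le j \le n$ contributes a copy of $\mathbb{Z}/p^{s-1}\mathbb{Z}$, of total multiplicity
$$\sum_{j=s-1}^{n} \varphi(p^{n-j}) = \sum_{l=0}^{n-s+1} \varphi(p^{l}) = p^{n-s+1},$$
using the telescoping identity $\sum_{l=0}^{N} \varphi(p^{l}) = p^{N}$. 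Assembling these contributions produces the second formula, with the convention that $\bigoplus_{j=1}^{s-2}$ is the empty sum when $s = 2$.

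I do not expect any genuine obstacle here beyond Theorem~\ref{thm2}: the whole argument is a counting exercise. The only places that call for a little care are the extremal index $j = n$ in the computation of $c_j$, and the observation that the point at which $\min(s-1,j)$ stops increasing is exactly what forces the case split $n \le s-1$ versus $n \ge s$, so that no single closed form covers both regimes cleanly.
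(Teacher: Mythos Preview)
Your argument is correct and follows essentially the same route as the paper: both start from Theorem~\ref{thm2}, reduce each summand via $\mathbb{Z}/p^{s-1}\mathbb{Z}\otimes\mathbb{Z}/i\mathbb{Z}\cong\mathbb{Z}/p^{\min(s-1,v_p(i))}\mathbb{Z}$, regroup the surviving terms by $p$-adic valuation with multiplicity $\varphi(p^{n-j})$, and then split according to whether $\min(s-1,j)$ saturates. Your write-up is in fact more explicit than the paper's, which simply asserts the combinatorial identity and leaves the telescoping sum $\sum_{l=0}^{n-s+1}\varphi(p^l)=p^{n-s+1}$ implicit.
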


\begin{proof} 
Let $r_i = \min\{s-1, i\}$. We first note the elementary isomorphism $$\mathbb{Z}/p^{s-1}\mathbb{Z} \otimes_{\mathbb{Z}} \mathbb{Z}/p^i\mathbb{Z} \cong \mathbb{Z}/p^{r_i}\mathbb{Z}.$$ By the distributivity of tensor products over direct sums, and by reindexing the terms in the summation according to their $p$-adic valuation, we obtain the following combinatorial identity:
$$ \mathbb{Z}/p^{s-1}\mathbb{Z} \otimes_{\mathbb{Z}} \left( \bigoplus_{j=2}^{p^n} \mathbb{Z}/j\mathbb{Z} \right) \cong \bigoplus_{i=1}^{n} \left( \mathbb{Z}/p^{r_i}\mathbb{Z} \right)^{\varphi(p^{n-i})}. $$
Here, the exponent $\varphi(p^{n-i})$ correctly counts the number of cyclic summands that contribute to the term $\mathbb{Z}/p^{r_i}\mathbb{Z}$. By Theorem \ref{thm2}, $\widetilde{K}_2(k[G])$ is isomorphic to this expression. The final result is obtained by evaluating $r_i$ for the cases $n \leq s-1$ and $n \geq s$ respectively.
\end{proof}

\section{$\widetilde{K}_2^{c}(\widehat{\mathbb{Z}}_p[G])$ and the extended Oliver's logarithm}
Recall that $K_2^{c}(\widehat{\mathbb{Z}}_p[G])$ is defined as the inverse limit of the groups $K_2((\mathbb{Z}/p^s\mathbb{Z})[G])$. As shown in \cite[p.~533]{oliver1987k}, for a finite abelian $p$-group $G$, we have the following isomorphism:
\begin{equation*}
\widetilde{K}_2^{c}(\widehat{\mathbb{Z}}_p[G]) \cong Wh_{2}^{\mathbb{Z}}(\widehat{\mathbb{Z}}_p[G]) \oplus \widetilde{H}_2(G),
\end{equation*}
in which the group $\widetilde{H}_2(G)$ (following the convention of Dennis, as distinct from the notation in Section 1) is written additively and characterized by the isomorphism:
$$\widetilde{H}_2(G) \cong G \otimes G / \langle g \otimes h + h \otimes g \mid g, h \in G \rangle.$$  
Under this identification, $\widetilde{H}_2(G)$ is generated by $g \tilde{\wedge} h$ (the class of $g \otimes h \in G \otimes G$), subject to the following relations:
\begin{enumerate}[(i)]
\item $g \tilde{\wedge} h + h \tilde{\wedge} g = 0$, 
\item $(g_1 g_2) \tilde{\wedge} h = g_1 \tilde{\wedge} h + g_2 \tilde{\wedge} h$, 
\item $g \tilde{\wedge} (h_1 h_2) = g \tilde{\wedge} h_1 + g \tilde{\wedge} h_2$.
\end{enumerate}	
These conditions imply that $\lambda(g \tilde{\wedge} h) = g^{\lambda} \tilde{\wedge} h = g \tilde{\wedge} h^{\lambda}$ for any $\lambda \in \mathbb{Z}$.

According to \cite[Theorem 3.7]{oliver1987k}, the homomorphism 
\begin{equation}\label{Gamma_2}
	\Gamma_2: \widetilde{K}_2^{c}(\widehat{\mathbb{Z}}_p[G]) \rightarrow HC_1(\widehat{\mathbb{Z}}_p[G])
\end{equation}
is defined by $\Gamma_2(\{g,u\}) = g \otimes \Gamma_{G}(u)$ for $g \in G$ and $u \in (\widehat{\mathbb{Z}}_p[G])^*$. Here, $\Gamma_{G} = \Delta \circ \mbox{Log}$ is given by 
$$\Delta = 1 - \frac{\Phi}{p}, \quad \Phi\left(\sum \lambda_i g_i\right) = \sum \sigma(\lambda_i) g_i^p,$$
and $\mbox{Log}$ is the logarithm map $\mbox{Log}(1-x) = -\sum_{n=1}^{\infty} \frac{x^n}{n}$. Since $G$ is a finite abelian $p$-group, $\Phi$ is a ring endomorphism of $\widehat{\mathbb{Z}}_p[G]$ serving as a Frobenius lift, satisfying $\Phi(x) \equiv x^p \pmod{p\widehat{\mathbb{Z}}_p[G]}$. The operator $\sigma$ denotes the Frobenius automorphism on the coefficients; however, as we are working over $\widehat{\mathbb{Z}}_p$ without further extensions, $\sigma$ acts as the identity. Although the series $\mbox{Log}(u)$ converges analytically only for $u \in 1 + p\widehat{\mathbb{Z}}_p + I(\widehat{\mathbb{Z}}_p[G])$, the map $\Gamma_{G}$ is uniquely extended to the entire unit group $(\widehat{\mathbb{Z}}_p[G])^*$. This extension is well-defined because the $\widehat{\mathbb{Z}}_p$-linear operator $\Delta$ cancels the $p$-adic divergences; specifically, the term $\dfrac{1}{p}\mbox{Log}(\Phi(u))$ compensates for the potentially non-convergent part of $\mbox{Log}(u)$, ensuring that the difference $\mbox{Log}(u) - \dfrac{1}{p}\mbox{Log}(\Phi(u))$ is $p$-adically convergent and lies within $\widehat{\mathbb{Z}}_p[G]$.

Furthermore, this construction successfully transforms the multiplicative structure of $(\widehat{\mathbb{Z}}_p[G])^*$ into the additive structure of $\widehat{\mathbb{Z}}_p[G]$ via the identity $\Gamma_{G}(u_1 u_2) = \Gamma_{G}(u_1) + \Gamma_{G}(u_2)$. Notably, by \cite[Theorem 2.7]{oliver1985whitehead}, the homomorphism $\Gamma_G$ induces an embedding of $Wh_1(\widehat{\mathbb{Z}}_p[G])$ into $\widehat{\mathbb{Z}}_p[G]$, which implies $\mbox{Ker}(\Gamma_G) = \pm G$. By construction, $\Gamma_2$ vanishes on $\{\pm g, \pm h\}$, allowing it to be well-defined on 
$$Wh_{2}^{\mathbb{Z}}(\widehat{\mathbb{Z}}_p[G]) = \widetilde{K}_2^{c}(\widehat{\mathbb{Z}}_p[G]) / \langle \{\pm g, \pm h\} : g, h \in G \rangle,$$
which fits into the following short exact sequence \cite[Theorem 3.9]{oliver1987k}:
\begin{equation} \label{Wh2}
0 \rightarrow Wh_{2}^{\mathbb{Z}}(\widehat{\mathbb{Z}}_p[G]) \xrightarrow{\Gamma_2} HC_1(\widehat{\mathbb{Z}}_p[G]) \xrightarrow{\omega_2} \widetilde{H}_2(G) \rightarrow 0,
\end{equation}
where $\omega_2$ is given by $\omega_2(g \otimes \lambda h) = \lambda (g \tilde{\wedge} g^{-1}h)$, $\lambda \in \widehat{\mathbb{Z}}_p$.

As a direct consequence of \cite[Lemma 3.8]{oliver1987k}, $\Gamma_2$ splits when $G$ is a product of cyclic $p$-groups of the same order. This property extends to all finite abelian $p$-groups, a result fundamentally based on the construction of the map $E_2$ described below.

\begin{theorem} \label{thm3}
Let $G$ be an arbitrary finite abelian $p$-group. There exists a sequence of isomorphisms:
$$\widetilde{K}_2^{c}(\widehat{\mathbb{Z}}_p[G]) \cong HC_1(\widehat{\mathbb{Z}}_p[G]) \cong \bigoplus_{g \in G} G/\langle g \rangle.$$
\end{theorem}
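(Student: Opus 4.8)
The second isomorphism $HC_1(\widehat{\mathbb{Z}}_p[G]) \cong \bigoplus_{g\in G} G/\langle g\rangle$ is \eqref{HC1}, so the content lies entirely in the first isomorphism $\widetilde{K}_2^{c}(\widehat{\mathbb{Z}}_p[G]) \cong HC_1(\widehat{\mathbb{Z}}_p[G])$. The plan is to deduce it from \eqref{Wh2} by showing that this short exact sequence splits: by \cite[p.~533]{oliver1987k} we have $\widetilde{K}_2^{c}(\widehat{\mathbb{Z}}_p[G]) \cong Wh_{2}^{\mathbb{Z}}(\widehat{\mathbb{Z}}_p[G]) \oplus \widetilde{H}_2(G)$, while \eqref{Wh2} presents $HC_1(\widehat{\mathbb{Z}}_p[G])$ as an extension of $\widetilde{H}_2(G)$ by $Wh_{2}^{\mathbb{Z}}(\widehat{\mathbb{Z}}_p[G])$, so once \eqref{Wh2} is split we obtain $HC_1(\widehat{\mathbb{Z}}_p[G]) \cong Wh_{2}^{\mathbb{Z}}(\widehat{\mathbb{Z}}_p[G]) \oplus \widetilde{H}_2(G) \cong \widetilde{K}_2^{c}(\widehat{\mathbb{Z}}_p[G])$. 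For $G$ a product of cyclic $p$-groups of a single order the splitting is Oliver's \cite[Lemma 3.8]{oliver1987k}; for arbitrary $G$ I would split \eqref{Wh2} by constructing an explicit section $E_2$ of $\omega_2$.

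First I would fix a decomposition $G = \langle g_1\rangle \times \cdots \times \langle g_r\rangle$ with $|g_i| = p^{k_i}$ and the indices ordered so that $k_1 \le k_2 \le \cdots \le k_r$. A direct check from the presentation (i)--(iii) of $\widetilde{H}_2(G)$ shows that $\widetilde{H}_2(G)$ is the direct sum of the cyclic groups generated by the $g_i \tilde{\wedge} g_j$ with $i<j$ (the summand indexed by $(i,j)$ having order $p^{\min(k_i,k_j)} = p^{k_i}$) together with, when $p=2$, the order-two cyclic groups generated by the $g_i \tilde{\wedge} g_i$; the only relations among these generators are their orders. I would then define $E_2 \colon \widetilde{H}_2(G) \to HC_1(\widehat{\mathbb{Z}}_p[G])$ on generators by $E_2(g_i \tilde{\wedge} g_j) = g_i \otimes g_i g_j$ for $i<j$ and (for $p=2$) $E_2(g_i \tilde{\wedge} g_i) = g_i \otimes g_i^2$, the right-hand sides read in $HC_1(\widehat{\mathbb{Z}}_p[G]) = (G \otimes_{\widehat{\mathbb{Z}}_p} \widehat{\mathbb{Z}}_p[G])/\langle g\otimes\lambda g\rangle$.

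Two verifications remain. That $E_2$ is a well-defined homomorphism: since $G$, as a $\widehat{\mathbb{Z}}_p$-module, is written multiplicatively with zero element $1 \in G$, one has $1 \otimes y = 0$ for every $y \in \widehat{\mathbb{Z}}_p[G]$, whence $p^{k_i}(g_i \otimes g_i g_j) = g_i^{p^{k_i}} \otimes g_i g_j = 1 \otimes g_i g_j = 0$, and likewise $2(g_i \otimes g_i^2) = g_i^2 \otimes g_i^2 = 0$ because $h \otimes h$ has the shape $g\otimes\lambda g$ for every $h\in G$; so the defining order relations are respected and $E_2$ extends uniquely to a homomorphism. That $E_2$ sections $\omega_2$: using $\omega_2(g \otimes \lambda h) = \lambda(g \tilde{\wedge} g^{-1}h)$ one gets $\omega_2(g_i \otimes g_i g_j) = g_i \tilde{\wedge} g_i^{-1}(g_i g_j) = g_i \tilde{\wedge} g_j$ and $\omega_2(g_i \otimes g_i^2) = g_i \tilde{\wedge} g_i$, so $\omega_2 \circ E_2$ is the identity on each generator of $\widetilde{H}_2(G)$, hence on all of it. Thus $E_2$ splits \eqref{Wh2}, and the displayed chain of isomorphisms follows.

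The care needed is in the structural description of $\widetilde{H}_2(G)$ (notably the extra order-two diagonal summands when $p=2$) and in noticing that the ordering $k_1 \le \cdots \le k_r$ is not cosmetic: $E_2$ is not symmetric in $i$ and $j$, and the identity $p^{k_i}(g_i \otimes g_i g_j) = 1 \otimes g_i g_j$ used for well-definedness relies on having $\min(k_i,k_j) = k_i$ — this non-symmetry is precisely the new ingredient needed beyond the equal-order case of \cite[Lemma 3.8]{oliver1987k}. I expect the more substantial work to be not this abstract splitting but its promotion, in Remark \ref{K2L}, to an explicit isomorphism $\widetilde{K}_2^{c}(\widehat{\mathbb{Z}}_p[G]) \to HC_1(\widehat{\mathbb{Z}}_p[G])$ extending Oliver's logarithm $\Gamma_2$: this requires lifting the section $E_2$ to the level of $\widetilde{K}_2^{c}(\widehat{\mathbb{Z}}_p[G])$ by representing the relevant classes with genuine units of $\widehat{\mathbb{Z}}_p[G]$ produced by a generalized Artin--Hasse exponential and then evaluating $\Gamma_G$ on those units.
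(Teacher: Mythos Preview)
Your argument is correct, but it follows a different path from the paper's main proof of Theorem~\ref{thm3}. The paper splits \eqref{Wh2} from the \emph{other} end: it builds a retraction $E_2 \colon HC_1(\widehat{\mathbb{Z}}_p[G]) \to Wh_2^{\mathbb{Z}}(\widehat{\mathbb{Z}}_p[G])$ of $\Gamma_2$ by setting $E_2(g\otimes v)=\{g,E_G(v)\}$ with $E_G=\mbox{Exp}\circ\Delta^{-1}$ (the Artin--Hasse type inverse to $\Gamma_G$), checks $E_2\circ\Gamma_2=\mathrm{id}$, and then invokes the equality of orders. Your approach --- sectioning $\omega_2$ instead --- is exactly the alternative the paper records in Remark~\ref{K2L}, where the section is written basis-free as $\varepsilon_2(g\tilde{\wedge}h)=g\otimes gf(h)$ using the linearization $f\colon G\to\widehat{\mathbb{Z}}_p[G]$; on your chosen generators $g_i\tilde{\wedge}g_j$ this specializes to your formula $g_i\otimes g_ig_j$. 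The basis-free version dispenses with the ordering $k_1\le\cdots\le k_r$ at the cost of verifying bilinearity in the first variable directly (via the antisymmetry $g_1\otimes\lambda g_2+g_2\otimes\lambda g_1=0$ in $HC_1$).

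What the two routes buy: your section of $\omega_2$ is the more elementary proof of the bare isomorphism. The paper's retraction of $\Gamma_2$ costs more (convergence of $E_G$ must be handled) but pays off in Remark~\ref{K2L}: the explicit extended logarithm is simply $\widetilde{\Gamma}_2=\Gamma_2+\varepsilon_2\circ(\text{projection to }\widetilde{H}_2(G))$, where the projection is read off from the decomposition $u=\zeta_u s_u h_u v_u$ of units --- so no further lifting via Artin--Hasse units is needed at that stage, contrary to your closing expectation. The exponential has already done its work inside the proof that $\Gamma_2$ is injective.
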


\begin{proof}
As the last isomorphism was established in the introduction (see \eqref{HC1}), it suffices to prove that $\widetilde{K}_2^{c}(\widehat{\mathbb{Z}}_p[G]) \cong HC_1(\widehat{\mathbb{Z}}_p[G])$. Since both groups are finite and have the same order, the result follows if the homomorphism $\Gamma_2$ in \eqref{Wh2} is a split injection. We consider the map $E_{G}$ formally defined on $\text{Im}(\Gamma_G) \subseteq \widehat{\mathbb{Z}}_p[G]$ by $E_{G}(v) = \mbox{Exp}(\Delta^{-1}(v))$, where 
$$\mbox{Exp}(x) = \sum_{n=0}^{\infty} \frac{x^n}{n!}, \quad \Delta^{-1} = \left(1-\frac{\Phi}{p}\right)^{-1} = \sum_{n=0}^{\infty} \frac{\Phi^n}{p^n}.$$ 
Note that while $\Delta^{-1}$ involves division by $p$, the map $E_G$ is strictly well-defined on the image of $\Gamma_G$. For any $u \in (\widehat{\mathbb{Z}}_p[G])^*$, the composition $\Delta^{-1}(\Gamma_G(u))$ formally reduces to $\mbox{Log}(u)$ in the ring of formal power series via the operator cancellation $\Delta^{-1} \circ \Delta = \text{id}$. Although $\mbox{Log}(u)$ may not converge analytically for all $u$, the identity $\mbox{Exp}(\mbox{Log}(u)) = u$ holds as a formal power series identity. This ensures that the composite map $E_G \circ \Gamma_G$ is the identity on the quotient $Wh_1(\widehat{\mathbb{Z}}_p[G])$.

On the subspace $\text{Im}(\Gamma_G)$, the map $E_{G}$ satisfies the homomorphism property $E_{G}(v_1+v_2) = E_{G}(v_1) E_{G}(v_2)$. It follows from the formal cancellation of the operators that for any $u \in (\widehat{\mathbb{Z}}_p[G])^*$, the identity 
$$(E_{G} \circ \Gamma_{G})(u) = \mbox{Exp}(\Delta^{-1}(\Delta(\mbox{Log}(u)))) = u$$
holds in the sense of the unique algebraic extension. For elements $v \notin \text{Im}(\Gamma_G)$, such as $v = \lambda g$ for $g \in G, \lambda \in \widehat{\mathbb{Z}}_p$, we set $E_G(v) = 1$. This is consistent with $HC_1(\widehat{\mathbb{Z}}_p[G])$ since $g \otimes \lambda g = 0$, ensuring the corresponding symbol in $Wh_2^{\mathbb{Z}}$ is $\{g, 1\} = 1$.

We define $E_2: HC_1(\widehat{\mathbb{Z}}_p[G]) \rightarrow Wh_{2}^{\mathbb{Z}}(\widehat{\mathbb{Z}}_p[G])$ by $E_2(g \otimes v) = \{g, E_{G}(v)\}$. Then for any generator $\{g,u\} \in Wh_2^{\mathbb{Z}}(\widehat{\mathbb{Z}}_p[G])$, we have
$$(E_2 \circ \Gamma_2)(\{g,u\}) = \{g, E_{G}(\Gamma_{G}(u))\} = \{g,u\},$$
where the last equality follows from the fact that $E_G(\Gamma_G(u))$ and $u$ can only differ by an element in $\text{Ker}(\Gamma_G) = \pm G$. Since the symbols of the form $\{g, \pm h\}$ for $h \in G$ are trivial in $Wh_2^{\mathbb{Z}}(\widehat{\mathbb{Z}}_p[G])$, the splitting $E_2 \circ \Gamma_2 = \text{id}$ is well-defined. This implies that $\Gamma_2$ is an injection, and combining this with the order argument, we conclude $\widetilde{K}_2^c(\widehat{\mathbb{Z}}_p[G]) \cong HC_1(\widehat{\mathbb{Z}}_p[G])$.
\end{proof}

\begin{remark} \label{K2L}
Let $G$ be a finite abelian $p$-group and $I$ be the augmentation ideal of $\widehat{\mathbb{Z}}_p[G]$. We first establish the linearization map $f: G \to \widehat{\mathbb{Z}}_p[G]$ by $f(\prod g_i^{\lambda_i}) = \sum \lambda_i g_i$ with $f(1)=0$. Based on this, we define the mapping $\varepsilon_2: \widetilde{H}_2(G) \to HC_1(\widehat{\mathbb{Z}}_p[G])$ by $\varepsilon_2(g \tilde{\wedge} h) = g \otimes g f(h)$. The linearity of $\varepsilon_2$ in the second variable is manifest, while for the first variable, it follows from the relation $g \otimes \lambda g = 0$ in $HC_1$, which implies the antisymmetric property $g_1 \otimes \lambda g_2 + g_2 \otimes \lambda g_1 = 0$. Specifically:
\begin{align*}
\varepsilon_2((g_1 g_2) \tilde{\wedge} h) &= (g_1 g_2) \otimes (g_1 g_2) f(h) \\
&= g_1 \otimes g_1 f(h) + g_2 \otimes g_2 f(h) + (g_1 \otimes g_2 f(h) + g_2 \otimes g_1 f(h)) \\
&= \varepsilon_2(g_1 \tilde{\wedge} h) + \varepsilon_2(g_2 \tilde{\wedge} h)
\end{align*}
where the cross-terms vanish identically by the "adding zero" technique. Since $\omega_2(\varepsilon_2(g \tilde{\wedge} h)) = g \tilde{\wedge} h$, the mapping $\varepsilon_2$ is a section of $\omega_2$, yielding an alternative proof that the exact sequence \eqref{Wh2} splits.

To extend the classical $p$-adic logarithm $\Gamma_2$ in \eqref{Wh2} to the full torsion subgroup, we recall the structural decomposition of the unit group. For a finite abelian $p$-group $G$, the group $SK_1(\widehat{\mathbb{Z}}_p[G])$ vanishes and the unit group $(\widehat{\mathbb{Z}}_p[G])^*$ admits a structural splitting based on the theory of integral logarithms (see \cite{oliver1988whitehead}):
\begin{equation} \label{unit-decomp}
(\widehat{\mathbb{Z}}_p[G])^* \cong \widehat{\mathbb{Z}}_p^* \times G \times Wh_1(\widehat{\mathbb{Z}}_p[G]),
\end{equation}
where $Wh_1(\widehat{\mathbb{Z}}_p[G]) \cong (1+I)/G$ is the torsion-free part.
Let $\mu_{m}(\widehat{\mathbb{Z}}_p)$ denotes the subgroup of $m$-th roots of unity in $\widehat{\mathbb{Z}}_p$.  The scalar factor is decomposed as
$$\widehat{\mathbb{Z}}_p^* \cong \begin{cases} \mu_{p-1}(\widehat{\mathbb{Z}}_p) \times (1+p\widehat{\mathbb{Z}}_p), & \text{if } p > 2; \\ \mu_{2}(\widehat{\mathbb{Z}}_2) \times (1+4\widehat{\mathbb{Z}}_2), & \text{if } p = 2. \end{cases}$$
Any unit $u$ is thus uniquely decomposed as $u = \zeta_u \cdot s_u \cdot h_u \cdot v_u$, with $\zeta_u$ in the torsion part of $\widehat{\mathbb{Z}}_p^*$, $s_u$ in the uniquely $p$-divisible part of $\widehat{\mathbb{Z}}_p^*$, $h_u \in G$, and $v_u \in Wh_1(\widehat{\mathbb{Z}}_p[G])$.

For an odd prime $p$, the symbols $\{g, \zeta_u\}$ and $\{g, s_u\}$ vanish because the order of $\zeta_u$ is coprime to $p$, and $1+p\widehat{\mathbb{Z}}_p$ is uniquely $p$-divisible. For $p=2$, the only non-trivial scalar contribution is $\{g, -1\} = \{g, g\}$. By setting $h'_u = h_u$ if $\zeta_u = 1$, and $h'_u = g h_u$ if $\zeta_u = -1$, the extended Oliver's logarithm $\widetilde{\Gamma}_2: \widetilde{K}_2^{c}(\widehat{\mathbb{Z}}_p[G]) \rightarrow HC_1(\widehat{\mathbb{Z}}_p[G])$ is defined as:
\begin{equation} \label{Gamma_2_ext}
\widetilde{\Gamma}_2(\{g, u\}) = g \otimes \Gamma_{G}(u) + \varepsilon_2(g \tilde{\wedge} h'_u) = g \otimes (\Gamma_{G}(u) + g f(h'_u)).
\end{equation}
Since $\widetilde{\Gamma}_2$ is the sum of the original $\Gamma_2$ in \eqref{Wh2} and the homomorphism $\varepsilon_2$, it is a well-defined isomorphism between finite groups of the same order. 

By virtue of the isomorphisms in \eqref{isos2}, the explicit map from $\widetilde{K}_2^c(\widehat{\mathbb{Z}}_p[G])$ to $HC_1(\widehat{\mathbb{Z}}_p[G])$ described above induces, as a natural consequence, an isomorphism to the linearized $K_2$-group:
$$\widetilde{L}_2: \widetilde{K}_2^{c}(\widehat{\mathbb{Z}}_p[G]) \longrightarrow \widetilde{K}_{2,L}(\widehat{\mathbb{Z}}_p[G]).$$
Specifically, this map is given on symbols by
$$\widetilde{L}_2(\{g, u\}) = [g, \Gamma_{G}(u) + gf(h'_u)],$$ 
where the symbol $[\cdot, \cdot]$ follows the convention in \cite{clauwens1987k}.
\end{remark}

The following examples are obtained through direct computation. The first case is due to Oliver (see \cite[Lemma 3.8]{oliver1987k}). For the counting of cyclic subgroups of a specific $p$-power order in $G$, one may refer to \cite{yeh1948on}. Here, $\varphi$ denotes the Euler's totient function.

\begin{eg}
Let $k \geq 1$ and $G = (C_{p^n})^k$. Then
$$\widetilde{K}_2^{c}(\widehat{\mathbb{Z}}_p[G]) \cong \left( \bigoplus_{i=1}^{n-1} (\mathbb{Z}/p^i\mathbb{Z})^{(p^k-1)p^{k(n-i-1)}} \right) \oplus (\mathbb{Z}/p^n\mathbb{Z})^{1+(k-1)p^{kn}}.$$
\end{eg}

\begin{eg}
Let $k, n \geq 1$, $G = (C_p)^k \times C_{p^n}$, and $r = p^{k+1} - p + 1$. Then
$$\widetilde{K}_2^{c}(\widehat{\mathbb{Z}}_p[G]) \cong (\mathbb{Z}/p\mathbb{Z})^{p^n(1+(k-1)p^k)} \oplus \left( \bigoplus_{i=1}^{n-1} (\mathbb{Z}/p^i\mathbb{Z})^{r\varphi(p^{n-i})} \right) \oplus (\mathbb{Z}/p^n\mathbb{Z})^{r}.$$
\end{eg}

\section*{Acknowledgement}

We are deeply indebted and sincerely grateful to the referees for their valuable time and insightful comments. 

\section*{Disclosure statement}
The author declares that there are no competing interests of a financial or personal nature.

\bibliography{mybibfile202408}

\end{document}